\newlength{\defbaselineskip}
\newcommand{\Gr}{{\rm Gr}}
\theoremstyle{plain}
\newtheorem{thm}{Theorem}
\newtheorem{lem}[thm]{Lemma}
\newtheorem{cor}[thm]{Corollary}
\newtheorem{prop}[thm]{Proposition}
\theoremstyle{definition}
\newtheorem{defn}[thm]{Definition}
\newtheorem{exmp}[thm]{Example}
\newtheorem{rem}[thm]{Remark}
\numberwithin{equation}{section}
\newcommand{\bigboxs}[1]
{ \multiput(#1)(40,0){2}
 {\line(0,40){40}}
\multiput(#1)(0,40){2}
 {\line(40,0){40}}
}
\begin{document}

\title[Maps from quiver Grassmannians to ordinary Grassmannians]{On natural maps from strata of quiver Grassmannians to ordinary Grassmannians}
\author{Kyungyong Lee and Li Li}
\thanks{Research of the first author is partially supported by NSF grant DMS 0901367. }

\address{Department of Mathematics, Wayne State University, Detroit, MI 48202}
\email{{\tt klee@math.wayne.edu}}
\address{Department of Mathematics and Statistics, 
Oakland University,
Rochester, MI 48309}
\email{{\tt li2345@oakland.edu}}

\begin{abstract} Caldero and Zelevinsky \cite{CZ} studied the geometry of quiver Grassmannians for the Kronecker quiver and computed their  Euler characteristics by examining natural stratification of quiver Grassmannians. We consider generalized Kronecker quivers and compute virtual Poincare polynomials of certain varieties which are the images under projections from strata of quiver Grassmannians to ordinary Grassmannians. In contrast to the Kronecker quiver case, these polynomials do not necessarily have positive coefficients. The key ingredient is the explicit formula for noncommutative cluster variables given in \cite{ls-noncomm}. \end{abstract}

 \maketitle
 



\section{introduction}
Recently the study of \emph{quiver Grassmannians} has been very active [because of its important role in Cluster algebra, and the surprising fact that every projective variety is a quiver Grassmannian] (for instance, see \cite{CC, CK1, CK2, CZ, DWZ, FK, KQ, Q, R-every, r-quantum}).
Let $Q$ be a quiver on vertices $\{1,...,n\}$. A \emph{representation} of $Q$ is a family $M=(M_i, \varphi_a)$, where each $M_i$ is a finite dimensional $\mathbb{C}$-vector space attached to a vertex $i$, and each $\varphi_a$ is a linear map attached to an arrow $a:j\longrightarrow i$. The \emph{dimension vector} of $M$ is the integer vector \textbf{dim} $M=(\dim M_1,...,\dim M_n)$. A \emph{subrepresentation} of $M$ is an $n$-tuple of subspaces $N_i\subseteq M_i$ such that  $\varphi_a(N_j)\subseteq N_i$ for any arrow $a:j\longrightarrow i$. For every integer vector $\mathbf{e} = (e_1,...,e_n)$, the quiver Grassmannian $\Gr_\mathbf{e}(M)$ is defined as the variety of subrepresentations of $M$ with the dimension vector $\mathbf{e}$.

Much of the focus has been on acyclic quiver Grassmannians, where $Q$ has no oriented cycles. A very interesting result of \cite{N, Q,  Ef, KQ}
 shows that
if $M$ is an indecomposable rigid representation of an acyclic quiver $Q$, then the Euler characteristic of any quiver Grassmannian is nonnegative.
Caldero and Zelevinsky \cite{CZ} studied the geometry of quiver Grassmannians for the Kronecker quiver (the quiver with two vertices and two arrows from one vertex to the other) and obtained simple formulas for their  Euler characteristics by examining natural stratification of quiver Grassmannians.  More precisely, they showed that the Euler characteristics of $
Z_{p,s}(M)$ and $
Z'_{p,s}(M)$, which are defined below, are nonnegative.

Throughout the paper, let $Q$ be a (generalized) Kronecker quiver, i.e. a quiver with two vertices labeled 1 and 2, and with $r\geq 2$ arrows from 1 to 2.
Let $M=(M_1, M_2; \varphi_1,...,\varphi_r)$ be a representation of $Q$ of dimension vector $(d_1,d_2)$. Now we define the main geometric objects of the paper. Note that $Z_{p,s}(M)$ and $Z'_{p,s}(M)$ are studied in \cite{CZ}.

\begin{defn} For two nonnegative integers $p$ and $s$, we define the following sets
$$\aligned
&Z_{p,s}(M)=\{U\in \text{Gr}_s(M_1) \, : \, \dim (\sum_{k=1}^r \varphi_k(U))=d_2-p\},\\
&\overline{Z}_{p,s}(M)=\{U\in\Gr_s(M_1): \dim(\sum_{k=1}^r\varphi_k(U))\le d_2-p\},\\
&Z'_{p,s}(M)=\{U\in \text{Gr}_{d_2-s}(M_2) \, : \, \dim (\bigcap_{k=1}^r \varphi_k^{-1}(U))=p\},\\
&\overline{Z}'_{p,s}(M)=\{U\in\Gr_{d_2-s}(M_2): \dim(\bigcap_{k=1}^r\varphi_k^{-1}(U))\ge p\}.
\endaligned
$$
\end{defn}
Using the natural structure of subvarieties of the Grassmannians, $Z_{p,s}(M)$ and $Z'_{p,s}(M)$ are quasi-projective varieties (in this paper, a variety is allowed to be reducible), $\overline{Z}_{p,s}(M)$ and $\overline{Z}_{p,s}(M)$ are projective varieties, $\overline{Z}_{p,s}(M)=\bigcup_{p'\ge p}Z_{p',s}(M)$, $\overline{Z}'_{p,s}(M)=\bigcup_{p'\ge p}Z'_{p',s}(M)$.
Note that $\overline{Z}_{p,s}(M)$ may not be the closure of $Z_{p,s}(M)$ since it may not be irreducible, although there is no example where $\overline{Z}_{p,s}$ is reducible to the authors' knowledge.

For $n\ge0$, let $M(n+3)$ (resp. $M(-n)$) be the indecomposable rigid representation  with dimension vector $(c_{n+2}, c_{n+1})$ (resp. $(c_{n+1}, c_{n+2})$). For definition of $c_n$, see Definition~\ref{cn}. It is easy to see that the correspondence $U\mapsto U^\perp$ gives the following isomorphisms of varieties:
\begin{equation}\label{eq;ZisoZ'}
Z_{p,s}(M(-n))\cong Z'_{p,s}(M(n+3)),\quad \overline{Z}'_{p,s}(M(-n))\cong\overline{Z}_{p,s}(M(n+3)).
\end{equation}

In this paper we want to see if Caldero-Zelevinsky's approach can be applied to generalized Kronecker quivers. We compute  the Euler characteristics of a special class of $\overline{Z}'_{p,s}(M)$.    Unlike the case of the Kronecker quiver, the Euler characteristic of $\overline{Z}'_{p,s}(M)$ is shown to be negative  for some $p,s,M$. We actually compute a more refined invariant, namely the virtual Poincare polynomial of $\overline{Z}'_{p,s}(M)$, using the formula for noncommutative cluster variables given in \cite{ls-noncomm}. 

The virtual Poincare polynomial $P_Y\in\mathbb{Z}[q^{1/2}]$ can be assigned for every complex algebraic variety $Y$ that is not necessarily smooth, compact or irreducible (with a slight modification from \cite[p.198]{FMacP}). It has the following properties:

\noindent (i) if $Y$ is smooth and compact, then $P_Y(q)$ is the usual Poincare polynomial $\sum b_jq^{j/2}$ where $b_j$ is the rank of the $j$-th homology group of $Y$.

\noindent (ii) if $Z$ is a closed subvariety of $Y$ and $U$ is the complement of $Z$ in $Y$, then $P_Y=P_Z+P_U$.

\noindent (iii) if $Y'\to Y$ is a bundle with fiber $F$ which is locally trivial in Zariski topology, then $P_{Y'}=P_Y\cdot P_F$.


Now we state the main theorem and main example. Recall that $[m]_q=\sum_{i=0}^{m-1}q^i$, $[m]_q!=\prod_{i=1}^m[i]_q!$, and $\binom{m}{n}_q=[m]_q!/([n]_q![m-n]_q!)$.
\begin{thm}\label{mainthm2}
Let $r\ge 2$, and let $M(6)$ be the indecomposable rigid representation of $Q$ with dimension vector $(r^3-2r,r^2-1)$.

(1) For any nonnegative integer $e_1$,
$$P_{\Gr_{e_1,1}(M(6))}=\binom{r-1}{1}_q\binom{r-1}{e_1}_q + \sum_{j=1}^{r-1}q^{(r-e_1)j-1}\bigg{[} \binom{r}{1}_q\binom{r-1}{e_1}_q-\binom{r-j-1}{e_1-j}_q\bigg{]}.$$

(2) The virtual Poincare polynomial of  $\overline{Z}'_{p,r^2-2}$ is $$P_{\overline{Z}'_{p,r^2-2}(M(6))}=\sum_{e_1=p}^{d_1}P_{\Gr_{e_1,1}(M(6))}(-1)^{e_1-p}q^{\binom{e_1-p+1}{2}}\binom{e_1-1}{e_1-p}_q.$$
\end{thm}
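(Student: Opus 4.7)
The two parts rest on different ingredients: Part (1) is a direct computation using the explicit formula for noncommutative cluster variables from \cite{ls-noncomm}, while Part (2) is a cohomological inversion relating $\overline{Z}'_{p,r^2-2}(M(6))$ to the quiver Grassmannians $\Gr_{e_1,1}(M(6))$ via a natural projection.

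For Part (1), I would apply the main formula of \cite{ls-noncomm}, which expresses the noncommutative cluster variable attached to the indecomposable rigid representation $M(6)$ as a $q$-weighted sum indexed by subrepresentations.  Extracting the coefficient corresponding to the dimension vector $(e_1,1)$ yields the Poincar\'{e} polynomial $P_{\Gr_{e_1,1}(M(6))}$.  The two-term shape of the asserted closed form---a ``main'' contribution $\binom{r-1}{1}_q\binom{r-1}{e_1}_q$ plus a sum indexed by $j=1,\dots,r-1$---reflects a natural stratification of $\Gr_{e_1,1}(M(6))$ according to the value of $\dim\sum_{k}\varphi_k(N_1)$.  The remaining work is to collect the resulting $q$-binomials into the stated form, which I expect to require only $q$-Pascal and $q$-Vandermonde.

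For Part (2), consider the natural projection $\pi:\Gr_{e_1,1}(M(6))\to\Gr_1(M_2)$, $(N_1,N_2)\mapsto N_2$.  For $U\in\Gr_1(M_2)$ the fiber $\pi^{-1}(U)$ equals $\Gr_{e_1}\bigl(\bigcap_{k=1}^{r}\varphi_k^{-1}(U)\bigr)$.  Stratify $\Gr_1(M_2)$ by the constructible subsets $W_m=\{U:\dim\bigcap_k\varphi_k^{-1}(U)=m\}$; over each $W_m$ the map $\pi$ restricts to a Zariski-locally trivial Grassmannian bundle with fiber $\Gr_{e_1}(\mathbb{C}^m)$.  Properties (ii) and (iii) of the virtual Poincar\'{e} polynomial then give
$$P_{\Gr_{e_1,1}(M(6))}=\sum_{m\geq e_1}P_{W_m}\binom{m}{e_1}_q.$$
Since $d_2-s=1$ when $s=r^2-2$, the definition of $\overline{Z}'_{p,r^2-2}(M(6))$ yields the decomposition $\overline{Z}'_{p,r^2-2}(M(6))=\bigsqcup_{m\geq p}W_m$, whence $P_{\overline{Z}'_{p,r^2-2}(M(6))}=\sum_{m\geq p}P_{W_m}$.

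Substituting the first displayed formula into the claimed right-hand side and interchanging the two summations reduces Part (2) to the $q$-identity
$$\sum_{j=0}^{t}(-1)^{j}q^{\binom{j+1}{2}}\binom{p+j-1}{j}_q\binom{p+t}{p+j}_q=1\qquad\text{for every }t\geq 0,$$
where $j=e_1-p$ and $t=m-p$; this can be proved by induction on $t$ using the $q$-Pascal rule $\binom{n}{k}_q=\binom{n-1}{k-1}_q+q^{k}\binom{n-1}{k}_q$.  The principal obstacle lies in Part (1): unpacking the cluster variable formula of \cite{ls-noncomm} for $M(6)$ and reorganizing the resulting double sums into the compact form asserted is where most of the bookkeeping takes place.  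Part (2), by contrast, is a clean consequence of the fibration structure together with a standard $q$-binomial inversion.
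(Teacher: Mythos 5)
Your plan for part (2) is essentially the paper's own argument, just repackaged.  The paper also fibers $\Gr_{\mathbf e}(M)$ over $\Gr_{d_2-e_2}(M_2)$, obtains
$P_{\Gr_{\mathbf e}(M)}=\sum_p\binom{p}{e_1}_qP_{Z'_{p,d_2-e_2}(M)}$ from exactly the Grassmannian-bundle structure you describe, and then inverts.  The paper carries out the inversion in two steps: it writes the relation as a triangular matrix with entries $\binom{j-1}{i-1}_q$, inverts it to get $P_{Z'_{p,\cdot}}$, and then sums over $p'\ge p$ using the elementary identity $\sum_{p'=p}^{e_1}(-1)^{e_1-p'}q^{\binom{e_1-p'}{2}}\binom{e_1}{p'}_q=(-1)^{e_1-p}q^{\binom{e_1-p+1}{2}}\binom{e_1-1}{e_1-p}_q$.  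You fuse those two steps into the single $q$-identity $\sum_{j=0}^{t}(-1)^{j}q^{\binom{j+1}{2}}\binom{p+j-1}{j}_q\binom{p+t}{p+j}_q=1$, which is correct (one can check it by $q$-Pascal induction on $t$) and is just the composite of the paper's inversion and summation.  So part (2) is fine and equivalent to the paper's route.

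Part (1), however, is a genuine gap: you describe the strategy but omit the content, and one of your guiding heuristics is off.  The formula of \cite{ls-noncomm} is a sum indexed by sets of colored subpaths of the maximal Dyck path $\mathcal{D}_n$, not by subrepresentations of $M(6)$, so it does not directly ``extract the coefficient corresponding to the dimension vector $(e_1,1)$.''  To connect it to $P_{\Gr_{e_1,1}(M(6))}$ the paper must first pass from noncommutative cluster variables to quantum cluster variables (Proposition~\ref{noncomm-quantum}) and then invoke the quantum cluster character to write $X_6$ in terms of virtual Poincar\'e polynomials of quiver Grassmannians (Proposition~\ref{X_n-virtual}); your sketch skips this bridge entirely.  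Moreover the two-term shape of the final answer does not arise from a geometric stratification by $\dim\sum_k\varphi_k(N_1)$ as you suggest; in the paper it comes from a five-way combinatorial case analysis according to the position $\ell$ of the unique vertical edge of $\mathcal{D}_6$ left uncovered by the two long colored subpaths $\alpha(0,\ell-1)$ and $\alpha(\ell,r^2-1)$, and the index $j$ in the closed form is a byproduct of that case split rather than a dimension invariant of $N_1$.  Finally, the computation is not a matter of ``only $q$-Pascal and $q$-Vandermonde'': one has to track $q$-powers through noncommutative products $\prod_i\beta_{[i]}$ in the quantum torus, which the paper does by a lengthy explicit calculation in each of the five cases.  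As written, part (1) of the proposal does not supply any of these steps.
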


\begin{exmp}\label{mainexample}
Let $r=10$. Then $\chi(\overline{Z}'_{5,98}(M(6)))=-27$ is negative. Indeed, its virtual Poincare polynomial can be computed using Theorem \ref{mainthm2}:
{\small
$$\aligned
&\phantom{++}q^{73}+2q^{72}+4q^{71}+7q^{70}+12q^{69}+19q^{68}+27q^{67}+36q^{66}+46q^{65}+55q^{64}+61q^{63}+63q^{62}\\
&+58q^{61}+46q^{60}+24q^{59}-5q^{58}-42q^{57}-81q^{56}-123q^{55}-158q^{54}-184q^{53}-195q^{52}-190q^{51}\\
&-164q^{50}-121q^{49}-62q^{48}+6q^{47}+77q^{46}+144q^{45}+198q^{44}+235q^{43}+249q^{42}+241q^{41}+209q^{40}\\
&+162q^{39}+101q^{38}+38q^{37}-25q^{36}-76q^{35}-116q^{34}-138q^{33}-146q^{32}-137q^{31}-119q^{30}-93q^{29}\\
&-65q^{28}-37q^{27}-14q^{26}+4q^{25}+15q^{24}+21q^{23}+22q^{22}+20q^{21}+16q^{20}+12q^{19}+8q^{18}+5q^{17}\\&+3q^{16}+2q^{15}+q^{14}+q^{13}+q^{12}+q^{11}+q^{10}+q^{9}+q^{8}+q^{7}+q^{6}+q^{5}+q^{4}+q^{3}+q^{2}+q+1.
\endaligned
$$
}
Note that the above example implies that for some $p'\ge 5$, $Z'_{p',98}(M(6))$ has negative Euler characteristic, and has a negative coefficient in its virtual Poincare polynomial.
A smaller example that also has a negative coefficient in the virtual Poincare polynomial (but with a positive Euler characteristic) is $\overline{Z}'_{1,23}(M(6))$ where $r=5$: indeed, using Theorem \ref{mainthm2},  $P_{\overline{Z}'_{1,23}(M(6))}$ is
$q^{22}+2q^{21}+2q^{20}+2q^{19}+q^{18}-q^{16}+q^{14}+2q^{13}+2q^{12}+2q^{11}+q^{10}+q^{9}+q^{8}+q^{7}+q^{6}+q^{5}+q^{4}+q^{3}+q^{2}+q^{1}+1$. As we see, $q^{16}$ has a negative coefficient.
\end{exmp}

The above computation has a few immediate consequences: $\overline{Z}'_{5,98}(M(6))\cong\overline{Z}_{5,98}(M(-3))$ is a singular projective variety; not all of its odd cohomology vanish; there is only one irreducible component with the maximal dimension 73. In general, we may not have a stratification of $Z'_{p,s}(M)$ (or $\overline{Z}'_{p,s}(M)$, $Z_{p,s}(M)$, $\overline{Z}_{p,s}(M)$) such that each stratum has only finite many (or no) fixed points under some $\mathbb{C}^*$-action. In other words, the existence of such a stratification for $r=2$ proved in  \cite[\S4]{CZ} seems to be very special for Kronecker quiver. 

The article is organized as follows.  In Section~\ref{Noncomm-section}, we recall the formula for noncommutative cluster variables given in \cite{ls-noncomm}. As a consequence, we obtain a formula for rank 2 quantum cluster variables in Section~\ref{quantum-section}. Theorem~\ref{mainthm2}(2) is proved in Section~\ref{proof_mainthm(2)}, and    proof of Theorem~\ref{mainthm2}(1)  is provided in Section~\ref{proof_mainthm(1)}.

\section{Noncommutative cluster variables}\label{Noncomm-section}
 In this section we briefly recall the formula for noncommutative cluster variables proved in \cite{ls-noncomm}. Fix a positive integer $r\geq 2$.
\begin{defn}\label{cn}
Let $\{c_n\}_{n\ge1}$ be the sequence  defined by the recurrence relation $$c_n=rc_{n-1} -c_{n-2},$$ with the initial condition $c_1=0$, $c_2=1$. 
\end{defn}
If $r=2$, then $c_n=n-1$. If $r>2$, let $\gamma_1=(r+\sqrt{r^2-4})/2$, $\gamma_2=(r-\sqrt{r^2-4})/2$, then 
$$
c_n= \frac{\gamma_1^{n-1}-\gamma_2^{n-1}}{\sqrt{r^2-4}}=\sum_{i\geq 0} (-1)^i \binom{n-2-i}{i}r^{n-2-2i}.
$$

In order to state our theorem, we fix an integer $n\geq 4$. For any pair $(a_1,a_2)$ of nonnegative integers, a Dyck path of type $a_1\times a_2$ is a lattice path from $(0,0)$ to $(a_1,a_2)$ that never goes above the diagonal joining $(0,0)$ and $(a_1,a_2)$. Among the Dyck paths of type $a_1\times a_2$ there is a unique maximal one denoted by $\mathcal{D}^{a_1\times a_2}$. 

\begin{defn}
Let $\mathcal{D}_n$ be the maximal Dyck path $\mathcal{D}^{(c_{n-1}-c_{n-2})\times c_{n-2}}$. It can be represented by a sequence $(w_0, \alpha_1,w_1,\cdots, \alpha_{c_{n-1}}, w_{c_{n-1}})$, where $w_0,\cdots,w_{c_{n-1}}$ are vertices and $\alpha_1,\cdots, \alpha_{c_{n-1}}$ are edges, such that $w_0=(0,0)$ and $w_{c_{n-1}}=(c_{n-1}-c_{n-2},c_{n-2})$ are the two endpoints of $\mathcal{D}_n$, and $\alpha_i $ connects $w_{i-1}$ and $w_i$ for $i=1,\dots,c_{n-1}$.
\end{defn}

\begin{rem}\label{Christoffel}
The word obtained from $\mathcal{D}_n$ by forgetting the vertices $w_i$ and replacing each horizontal edge by the letter $h$ and each vertical edge by the letter $v$ is (by definition) the Christoffel word of slope $c_{n-2}/(c_{n-1}-c_{n-2})$.
\end{rem}

\begin{defn}
Let $v_i$ be the upper end point of the $i$-th vertical edge of $\mathcal{D}_n$. More precisely, let $i_1<\cdots<i_{c_{n-2}}$ be the sequence of integers such that $\alpha_{i_j}$ is vertical for any $1\leq j\leq c_{n-2}$. Define a sequence $v_0,v_1,\cdots,v_{c_{n-2}}$ of vertices by $v_0=(0,0)$ and $v_j=w_{i_j}$. 
\end{defn}

\begin{exmp} Let $r=3$, $n=5$. The following figure illustrates edges $\{\alpha_i\}_{1\le i\le 8}$ and vertices $\{v_i\}_{0\le i\le 3}$ in $\mathcal{D}_5=\mathcal{D}^{(8-3)\times 3}$. 
\begin{figure}[h]\setlength{\unitlength}{.7pt}
\begin{picture}(200,110)
\bigboxs{0,0}\bigboxs{40,0}\bigboxs{80,0}\bigboxs{120,0}\bigboxs{160,0}
\bigboxs{0,40}\bigboxs{40,40}\bigboxs{80,40}\bigboxs{120,40}\bigboxs{160,40}
\bigboxs{0,80}\bigboxs{40,80}\bigboxs{80,80}\bigboxs{120,80}\bigboxs{160,80}
\put(15,-10){$\tiny{\alpha_1}$}\put(50,-10){$\tiny{\alpha_2}$}
\put(132,30){$\tiny{\alpha_5}$}\put(98,30){$\tiny{\alpha_4}$}
\put(178,70){$\tiny{\alpha_7}$}
\put(83,17){$\tiny{\alpha_3}$}\put(163,57){$\tiny{\alpha_6}$}\put(203,97){$\tiny{\alpha_8}$}
\linethickness{1pt}\put(0,0){\line(5,3){200}}
\linethickness{3pt}\put(0,0){\line(1,0){81}}
\linethickness{3pt}\put(80,0){\line(0,1){41}}
\linethickness{3pt}\put(80,40){\line(1,0){81}}
\linethickness{3pt}\put(160,40){\line(0,1){41}}
\linethickness{3pt}\put(160,80){\line(1,0){41}}
\linethickness{3pt}\put(200,80){\line(0,1){41}}
\put(0,0){\circle*{8}}\put(80,40){\circle*{8}}\put(160,80){\circle*{8}}\put(200,120){\circle*{8}}
\put(-20,-5){$v_0$}\put(63,25){$v_1$}\put(143,69){$v_2$}\put(205,120){$v_3$}
\end{picture}
\end{figure}
\end{exmp}

We introduce certain special subpaths called colored subpaths. These colored subpaths are defined by certain slope conditions as follows.

\begin{defn}
For any $i<j$, let $s_{i,j}$ be the slope of the line through $v_i$ and $v_j$. Let $s$ be the slope of the diagonal, that is, $s=s_{0,c_{n-2}}$. 
\end{defn}

\begin{defn}[Colored subpaths]\label{alpha(i,k)}
For any $0\leq i<k\leq c_{n-2}$, let $\alpha(i,k)$ be the subpath of $\mathcal{D}_n$ defined as follows.

\noindent (1) If $s_{i,t}\leq s$ for all $t$ such that $i<t\leq k$, then let $\alpha(i,k)$ be the subpath from $v_i$ to $v_k$. Each of these subpaths will be called a BLUE subpath.

\noindent (2) If $s_{i,t}> s$ for some $i<t\leq k$, then

(2-a) if the smallest such $t$ is of the form $i+c_m-wc_{m-1}$ for some integers $3\le m\le n-1$ and $1\leq w< r-1$, then let $\alpha(i,k)$ be the subpath from $v_i$ to $v_k$. Each of these subpaths will be called a GREEN subpath. When $m$ and $w$ are specified, it will be said to be $(m,w)$-green.

(2-b) otherwise, let $\alpha(i,k)$ be the subpath from the immediate predecessor of $v_i$ to $v_k$. Each of these subpaths will be called a RED subpath.
\end{defn}

Note that every pair $(i,k)$ defines exactly one subpath $\alpha(i,k)$. We call these subpaths the \emph{colored subpaths} of $\mathcal{D}_n$. We denote the set of all these subpaths together with the single edges $\alpha_i$ by $\mathcal{P}(\mathcal{D}_n)$, that is,$$\mathcal{P}(\mathcal{D}_n)=\{\alpha(i,k)\,|\, 0\leq i<k\leq c_{n-2}\} \cup \{\alpha_1,\cdots,\alpha_{c_{n-1}} \}.$$

Now we define a set $\mathcal{F}(\mathcal{D}_n)$ of certain sequences of non-overlapping subpaths of $\mathcal{D}_n$. This set will parametrize the monomials in our expansion formula.

\begin{defn}
Let $\mathcal{F}(\mathcal{D}_n)$ be the set of sets 
$\{\beta_1,\cdots,\beta_t\}$ with variable size $t\ge 0$ satisfying the following conditions:
\begin{itemize}
\item $\beta_j\in \mathcal{P}(\mathcal{D}_n)$ for all $1\leq j\leq t$,
\item if $j\neq j'$, then $\beta_j$ and $\beta_{j'}$ have no common edge,
\item if $\beta_j=\alpha(i,k)$ and $\beta_{j'}=\alpha(i',k')$, then $i\neq k'$ and $i'\neq k$,
\item if $\beta_j$ is $(m,w)$-green, then at least one of the $(c_{m-1}-wc_{m-2})$ preceding edges of $v_i$ is contained in some $\beta_{j'}$.
\end{itemize}
\end{defn}

For each $\beta\in \mathcal{F}(\mathcal{D}_n)$, we say that $\alpha_i$ is \emph{supported on} $\beta$ if and only if $\alpha_i\in\beta$ or $\alpha_i$ is contained in some blue, green or red subpath $\beta_j\in\beta$. The \emph{support} of $\beta$, denoted by $\text{supp}(\beta)$, is defined to be the union of $\alpha_i$'s that are supported on $\beta$.

\begin{defn}
For each $\beta\in \mathcal{F}(\mathcal{D}_n)$ and each $i\in\{1,\cdots,c_{n-1}\}$, let
$$\beta_{[i]}=\left\{\begin{array}{ll}
x^{-1}y^{r}, &\text{ if }\alpha_i\text{ is not supported on }\beta\text{ and }\alpha_i\text{ is horizontal;}\\
&\\
x^{-1}y^{r-1}, &\text{ if }\alpha_i\text{ is not supported on }\beta\text{ and }\alpha_i\text{ is vertical;}\\ 
&\\
x^{-1}y^{0}, &\text{ if }\alpha_i\in\beta\text{ and }\alpha_i\text{ is horizontal;}\\
&\\
x^{-1}y^{-1}, &\text{ if }\alpha_i\in\beta\text{ and }\alpha_i\text{ is vertical;}\\
&\\
x^{0}y^{0},  &\text{ if }\alpha_i\text{ is horizontal and }\alpha_i\in\alpha(j,k)\in\beta\text{ for 
some }j,k;\\
&\\
x^{0}y^{-1}, &\text{ if }\alpha_i\text{ is vertical, }\alpha_{i-r+1}\text{ is horizontal, and }\alpha_i,\alpha_{i-r+1}\in\alpha(j,k)\in\beta\text{ for some }j,k;\\ 
&\\
x^{1}y^{-1}, &\text{ if }\alpha_i\text{ and }\alpha_{i-r+1}\text{ are  vertical, and }\alpha_i,\alpha_{i-r+1}\in\alpha(j,k)\in\beta\text{ for some }j,k;\\ 
&\\
x^{-1}y^{-1}, &\text{ if }\alpha_i\text{ is the first (vertical) edge of a red subpath }\alpha(j,k)\text{ in }\beta. 
\end{array}\right.$$
\end{defn}

Let $K$ be the skew field $\mathbb{C}(x,y)$. The Kontsevich automorphism $F_r:K\to K$ is given in \cite{K} by 
\begin{equation}\label{Kont_map}\left\{\begin{array}{l}x \mapsto xyx^{-1} \\ y \mapsto (1+y^r)x^{-1}.\end{array}\right.\end{equation}
Define $x_n=F_r^n(x)$. It has been shown that the expressions for $x_n$ in terms of $x$ and $y$ are Laurent polynomials  by Berenstein and Retakh \cite{BR} (including the case of unequal parameters) and earlier by Usnich \cite{U}.
 For $A_1,\dots, A_m\in K$, we let $\prod_{i=1}^m A_i$ denote the non-commutative product $A_1A_2\cdots A_m$.
 We are now ready to state the main result in \cite{ls-noncomm} (see \cite{r-noncomm} for a generalized formula in the case of unequal parameters).  

\begin{thm}\label{mainthm}
For any integer $n\geq 4$,
\begin{equation}\label{maineq1}x_{n-1}=\sum_{\mathbf{\beta}\in\mathcal{F}(\mathcal{D}_n)} xyx^{-1}y^{-1}x\left(\prod_{i=1}^{c_{n-1}}\beta_{[i]}\right)x^{-1}. 
\end{equation}
\end{thm}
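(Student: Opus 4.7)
The plan is to proceed by induction on $n\geq 4$, using a noncommutative Markov-type recursion for the $x_n$ that comes directly from the Kontsevich automorphism. For the base case $n=4$, the Dyck path $\mathcal{D}_4$ has dimensions $(r-1)\times 1$, so $c_{n-2}=1$ and the only admissible colored subpaths are single edges. One can enumerate $\mathcal{F}(\mathcal{D}_4)$ directly and compare with $x_3=F_r^3(x)$, computed by three explicit iterations of the substitution $x\mapsto xyx^{-1}$, $y\mapsto(1+y^r)x^{-1}$.

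For the inductive step, I would first establish the recursion
\begin{equation*}
x_{n+1} \;=\; x_n\, x_{n-1}^{-1}\, x_n^{r-1} \;+\; x_n\, x_{n-1}^{-1}\, x_n^{-1}.
\end{equation*}
This follows from $x_{n+1}=F_r^n(F_r(x))=x_ny_nx_n^{-1}$ together with $y_n=(1+y_{n-1}^r)x_{n-1}^{-1}$, $y_{n-1}=x_{n-1}^{-1}x_nx_{n-1}$, and the resulting conjugation identity $y_{n-1}^r=x_{n-1}^{-1}x_n^rx_{n-1}$. In the commutative limit this recovers the classical rank $r$ cluster exchange $x_{n+1}x_{n-1}=1+x_n^r$, which is a useful sanity check. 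Substituting the inductive formulas for $x_n$ and $x_{n-1}$ into the right side converts it into a sum indexed by tuples $(\beta^{(1)},\dots,\beta^{(r)},\gamma)\in \mathcal{F}(\mathcal{D}_{n+1})^r\times\mathcal{F}(\mathcal{D}_n)$ (plus a correction sum coming from the second summand), and the problem reduces to reorganizing this as a single sum over $\mathcal{F}(\mathcal{D}_{n+2})$.

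The key combinatorial ingredient is the self-similarity of the Christoffel word (see Remark \ref{Christoffel}): because $c_{n+1}=rc_n-c_{n-1}$, the word for $\mathcal{D}_{n+2}$ decomposes into blocks mimicking the structure of $\mathcal{D}_{n+1}$ glued along seams whose combinatorics is governed by $\mathcal{D}_n$. I would formalize this by constructing an explicit map that sends a tuple of elements from the smaller $\mathcal{F}$'s to an element of $\mathcal{F}(\mathcal{D}_{n+2})$, together with a cancellation/correction rule that accounts for the second summand of the recursion, and then verify that the monomial weights $\prod_i\beta_{[i]}$ match under this map via a case analysis driven by the rules defining $\beta_{[i]}$. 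The universal prefix $xyx^{-1}y^{-1}x$ and suffix $x^{-1}$ should be tracked separately; they conveniently "telescope" into the prefix/suffix of the next level.

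The main obstacle will be the bookkeeping for the green subpaths. Their defining condition---that the first overshooting index have the form $i+c_m-wc_{m-1}$ and that at least one of the $c_{m-1}-wc_{m-2}$ preceding edges be supported---encodes the deeper recursive layers hidden inside $\mathcal{D}_{n+2}$. Verifying that $(m,w)$-green subpaths correspond correctly across the bijection, and that the support condition propagates faithfully through the induction, is the most delicate part of the argument; red subpaths (which start from the immediate predecessor of $v_i$) introduce similar but milder edge-boundary complications. Once this case analysis is complete, matching of monomial exponents follows mechanically from the eight-case definition of $\beta_{[i]}$.
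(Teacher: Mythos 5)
This theorem is not proved in the paper at all: the text explicitly states ``We are now ready to state the main result in \cite{ls-noncomm}'' and then simply records the formula, so there is no internal proof to compare your attempt against; the argument lives in the cited Lee--Schiffler paper. That said, your sketch can be assessed on its own terms, and there is a genuine gap right at the heart of the inductive step. Your recursion
$$x_{n+1} \;=\; x_n\, x_{n-1}^{-1}\, x_n^{r-1} \;+\; x_n\, x_{n-1}^{-1}\, x_n^{-1}$$
is correct (it is exactly the manipulation that reappears inside the proof of Proposition~\ref{noncomm-quantum}). But you then propose to ``substitute the inductive formulas for $x_n$ and $x_{n-1}$ into the right side'' to obtain a sum indexed by $\mathcal{F}(\mathcal{D}_{n+1})^r\times\mathcal{F}(\mathcal{D}_n)$. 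The right-hand side contains $x_{n-1}^{-1}$, not $x_{n-1}$, and the inductive hypothesis gives $x_{n-1}$ as a (large) noncommutative Laurent \emph{sum}; its multiplicative inverse in the skew field $K$ is not again a sum of monomials and admits no termwise substitution. Your proposed indexing by a factor $\gamma\in\mathcal{F}(\mathcal{D}_n)$ for the $x_{n-1}^{-1}$ slot therefore has no meaning as stated, and the entire reduction to a bijection with $\mathcal{F}(\mathcal{D}_{n+2})$ never gets off the ground.

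To repair this you would need to reorganize the induction so that only \emph{positive} products of the combinatorial sums occur. The natural device is to carry a parallel combinatorial formula for $y_{n}$ alongside the one for $x_n$, and to induct on inverse-free identities such as $x_n x_{n-1} = x_{n-1}\,y_{n-1}$ (from $x_n = x_{n-1} y_{n-1} x_{n-1}^{-1}$) and $y_n\, x_{n-1} = 1 + y_{n-1}^r$; both sides are then genuine polynomial products of the conjectured sums and can in principle be matched term-by-term. Without this adjustment the plan fails before the Christoffel-word self-similarity and the green/red bookkeeping---which are indeed the right combinatorial ingredients, but which you have only gestured at rather than carried out---ever come into play.

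Two smaller points. In the base case $n=4$ one has $c_{n-2}=c_2=1$, so the unique non-edge colored subpath $\alpha(0,1)$ \emph{does} exist; ``the only admissible colored subpaths are single edges'' is not quite right, though the enumeration is still short. And since $\mathcal{D}_{n+2}$ has type $(c_{n+1}-c_n)\times c_n$ and the Christoffel self-similarity you invoke really does realize this word as a concatenation of $\mathcal{D}_{n+1}$- and $\mathcal{D}_n$-blocks, the geometric intuition is sound; it is the algebraic substitution step that has to be rebuilt.
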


We note that Di Francesco and Kedem \cite{DK, DK2} obtained a simpler formula for $x_n$ for the special case of $r=2$.

\section{Quantum cluster variables of rank 2}\label{quantum-section}
Quantum cluster algebras are introduced by Berenstein and Zelevinsky in \cite{BZ}.
Let $\mathcal{A}(r,r)$ be the quantum cluster algebra of rank 2, which is defined as follows.  The ambient field $\mathcal{F}$ is the skew-field of fractions
of the quantum torus with generators $X_1$ and $X_2$ satisfying the quasi-commutation relation $X_1X_2=qX_2X_1$. Then $\mathcal{A}(r,r)$ is the $\mathbb{Z}[q^{\pm 1/2}]$-subalgebra of $\mathcal{F}$ generated by a sequence of cluster variables $\{X_n : n \in \mathbb{Z}\}$ defined recursively from the relations
\begin{equation}\label{q-recursion}X_{n-1}X_{n+1}=q^{r/2}X_n^r+1, \quad \forall n\in\mathbb{Z}.\end{equation}
It is easy to check that
\begin{equation}\label{q-comm}
X_nX_{n+1}=qX_{n+1}X_n, \quad \forall n\in\mathbb{Z}. 
\end{equation}

We need the following relation between completely noncommutative cluster variables and quantum cluster variables. 
Let $R=\mathbb{Z}\langle x^{\pm1},y^{\pm1}\rangle$,  $\bar{R}=\mathbb{Z}[q^{\pm1/2}]\langle X_1^{\pm1},X_2^{\pm1}\rangle/(X_1X_2-qX_2X_1)\subset \mathcal{F}$. Let $\varphi:R\to\bar{R}$ be the ring homomorphism determined by $\varphi(x)=q^{1/2}X_1$, $\varphi(y)=q^{-1/2}X_2$. For convenience we also denote $\bar{z}=\varphi(z)$ for any $z\in R$. 

\begin{prop}\label{noncomm-quantum}
Let $n$ be a nonnegative integer. Then
$$\bar{x}_n=\varphi(x_n)=q^{1/2}X_{n+1}.$$
\end{prop}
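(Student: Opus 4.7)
The plan is to lift $F_r$ to a $\mathbb{Z}[q^{\pm 1/2}]$-algebra endomorphism $\bar{F}_r$ of $\bar{R}$ satisfying $\varphi\circ F_r=\bar{F}_r\circ\varphi$, and then check that $\bar{F}_r$ realizes the quantum cluster mutation $X_n\mapsto X_{n+1}$. Once this compatibility is in place, the proposition follows by applying $\bar{F}_r^n$ to $\varphi(x)=q^{1/2}X_1$.

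First I would dispose of the base case $n=1$: a direct computation gives
$$\varphi(x_1)=\varphi(xyx^{-1})=q^{-1/2}X_1X_2X_1^{-1}=q^{1/2}X_2,$$
using \eqref{q-comm}. Then define $\bar{F}_r\colon \bar{R}\to\bar{R}$ as the $\mathbb{Z}[q^{\pm 1/2}]$-algebra endomorphism specified on generators by $\bar{F}_r(X_1)=X_2$ and $\bar{F}_r(X_2)=X_1^{-1}(1+q^{r/2}X_2^r)$, which equals $X_3$ by \eqref{q-recursion}. The well-definedness of $\bar{F}_r$ (preservation of $X_1X_2=qX_2X_1$) reduces to $X_2X_3=qX_3X_2$, which is a case of \eqref{q-comm}.

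The key step is verifying $\varphi\circ F_r=\bar{F}_r\circ\varphi$; since both sides are ring homomorphisms $R\to\bar{R}$, it suffices to check agreement on the generators $x,y$. On $x$ both sides equal $q^{1/2}X_2$ by the base case. On $y$, I would expand
$$\varphi\bigl((1+y^r)x^{-1}\bigr)=q^{-1/2}X_1^{-1}+q^{-(r+1)/2}X_2^rX_1^{-1},$$
and push $X_1^{-1}$ past $X_2^r$ via the iterated quasi-commutation $X_2^rX_1^{-1}=q^rX_1^{-1}X_2^r$ to obtain $q^{-1/2}X_1^{-1}(1+q^{r/2}X_2^r)=\bar{F}_r(\varphi(y))$, as required.

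To finish, a straightforward induction on $n$ shows $\bar{F}_r(X_n)=X_{n+1}$ for all $n\ge 1$: the cases $n=1,2$ hold by construction, and the inductive step applies $\bar{F}_r$ term-by-term to $X_{n+1}=X_{n-1}^{-1}(q^{r/2}X_n^r+1)$ from \eqref{q-recursion}, recognizing the result as $X_{n+2}$. Combining with the intertwining yields, for $n\ge 0$,
$$\bar{x}_n=\varphi\bigl(F_r^n(x)\bigr)=\bar{F}_r^n\bigl(\varphi(x)\bigr)=q^{1/2}\bar{F}_r^n(X_1)=q^{1/2}X_{n+1}.$$
The main obstacle is really the intertwining check on $y$: the exact cancellations of $q$-powers there are what calibrate the normalization $\varphi(x)=q^{1/2}X_1$, $\varphi(y)=q^{-1/2}X_2$ against the Kontsevich map \eqref{Kont_map}; everything else is either direct or a routine induction.
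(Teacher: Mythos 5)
Your route is genuinely different from the paper's. The paper stays at the level of the explicit Laurent expressions: it manipulates the Kontsevich recursion to derive, inside the skew field, the identity
$x_{n-1}x_{n+1}=x_{n-1}x_n x_{n-1}^{-1}x_n^{-1}(1+x_n^r)$,
then applies $\varphi$ and the inductive hypothesis together with $X_nX_{n+1}=qX_{n+1}X_n$ to read off $\bar{x}_{n+1}=q^{1/2}X_{n+2}$. Your plan instead lifts $F_r$ to a map $\bar{F}_r$ intertwined with $\varphi$, identifies $\bar F_r$ with the quantum mutation, and then simply iterates. That makes the ``mutation'' structure explicit, which is conceptually cleaner; your verification of the intertwining on $y$ (the $q^{-(r+1)/2}\cdot q^r=q^{(r-1)/2}$ bookkeeping) is exactly the point where the choice $\varphi(x)=q^{1/2}X_1$, $\varphi(y)=q^{-1/2}X_2$ is forced, and it checks out.

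One concrete flaw you should fix: $\bar{F}_r$ is \emph{not} an algebra endomorphism of $\bar{R}$. Since $\bar{F}_r(X_2)=X_3=X_1^{-1}(1+q^{r/2}X_2^r)$, its inverse $X_3^{-1}$ is not a quantum Laurent polynomial, so $\bar{F}_r(X_2^{-1})\notin\bar{R}$. You must define $\bar{F}_r$ as an automorphism of the ambient skew field $\mathcal{F}$ (the quantum mutation, a birational automorphism of the quantum torus), and likewise your induction $\bar{F}_r(X_n)=X_{n+1}$ needs $\bar{F}_r$ applied to elements $X_n\in\mathcal{F}$ that lie outside $\bar{R}$ once $n\geq 3$. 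The iterated intertwining $\varphi\circ F_r^n=\bar{F}_r^n\circ\varphi$ also implicitly requires $\varphi$ to make sense on the intermediate elements $F_r^k(x),F_r^k(y)$, not merely on $x,y$; the Laurent phenomenon puts $x_k\in R$ but not $y_k$, so some care about the domain is needed (this is the same kind of passage to the ambient skew field that the paper's proof also uses implicitly when it inverts $\bar x_{n-1}$). With those corrections the argument goes through and gives a tidy alternative proof.
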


\begin{proof}
We use induction on $n$. The base cases $n=0,1$ are easy to verify. For the inductive step, let $n\ge 1$ and suppose the equality holds for $n$ and $n-1$.
Note that
$$\aligned 
x_{n-1}x_{n+1}&\overset{(\ref{Kont_map})}{=}x_{n-1}x_{n}y_{n}x_{n}^{-1}\overset{(\ref{Kont_map})}=x_{n-1}x_{n}(1+y_{n-1}^r)x_{n-1}^{-1}x_{n}^{-1}\\
&\overset{(\ref{Kont_map})}=x_{n-1}x_{n}(1+(x_{n-1}^{-1}x_{n}x_{n-1})^r)x_{n-1}^{-1}x_{n}^{-1}=x_{n-1}x_{n}(1+x_{n-1}^{-1}x_{n}^rx_{n-1})x_{n-1}^{-1}x_{n}^{-1}\\
&=x_{n-1}x_{n}x_{n-1}^{-1}x_{n}^{-1}+x_{n-1} x_{n}x_{n-1}^{-1}x_{n}^{-1} x_{n}^r=x_{n-1}x_{n}x_{n-1}^{-1}x_{n}^{-1}(1+x_{n}^r).\endaligned$$
By inductive assumption, 
$$\aligned 
&\bar{x}_{n-1} \bar{x}_{n+1}= \bar{x}_{n-1} \bar{x}_{n}\bar{x}_{n-1}^{-1} \bar{x}_{n}^{-1} (1+ \bar{x}_{n}^r) \\
&= \left(q^{1/2}X_{n}\right)\left(q^{1/2}X_{n+1}\right) \left(q^{1/2}X_{n}\right)^{-1} \left(q^{1/2}X_{n+1}\right)^{-1}  (1+ \bar{x}_{n}^r)\overset{(\ref{q-comm})}=q(1+ \bar{x}_{n}^r). \endaligned$$
Then
$$
\bar{x}_{n+1}=q\bar{x}_{n-1}^{-1}(1+\bar{x}_n^r)=q^{1/2}X_n^{-1}(1+q^{r/2}X_{n+1}^r)=q^{1/2}X_{n+2}.
$$
This concludes the inductive proof.
\end{proof}

\begin{cor}\label{quantum-pos}
All coefficients of the quantum Laurent expression for $X_n$ in terms of $X_1$ and $X_2$ are nonnegative.
\end{cor}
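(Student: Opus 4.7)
The plan is to read positivity off the formula (\ref{maineq1}) via Proposition \ref{noncomm-quantum}. For $n \ge 1$, write $X_n = q^{-1/2}\varphi(x_{n-1})$; the cases $n=1,2$ are trivial and $n=3$ follows from the Kontsevich recursion applied directly to $x_2$. For $n\ge 4$, the crucial observation is that (\ref{maineq1}) expresses $x_{n-1}$ as a sum over $\mathcal{F}(\mathcal{D}_n)$ of noncommutative monomials in $x^{\pm 1},y^{\pm 1}$, each appearing with coefficient $+1$ and with no cancellations, because every $\beta_{[i]}$ is itself a single monomial. Since $\varphi(x^{\pm 1})=q^{\pm 1/2}X_1^{\pm 1}$ and $\varphi(y^{\pm 1})=q^{\mp 1/2}X_2^{\pm 1}$ are monomials with unit scalar coefficient times a power of $q^{1/2}$, applying $\varphi$ to each word and then standardizing via $X_1X_2=qX_2X_1$ produces a single monomial $q^{e/2}X_1^aX_2^b$ with coefficient $+1$. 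Collecting like monomials in $\varphi(x_{n-1})$, every coefficient of $X_1^aX_2^b$ is a sum of powers of $q^{1/2}$ with nonnegative integer multiplicities, hence lies in $\mathbb{Z}_{\ge 0}[q^{\pm 1/2}]$; multiplication by $q^{-1/2}$ preserves this property, giving the claim for all $n\ge 1$.

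For $n\le 0$, I would reduce to the previous case via the $\mathbb{Z}$-algebra involution $\sigma$ of the ambient quantum torus determined by $\sigma(X_1)=X_2$, $\sigma(X_2)=X_1$, $\sigma(q^{1/2})=q^{-1/2}$. This $\sigma$ is well defined because it preserves the relation $X_1X_2=qX_2X_1$, and a short induction on the recurrence (\ref{q-recursion}) using (\ref{q-comm}) shows $\sigma(X_n)=X_{3-n}$ for every $n\in\mathbb{Z}$. Applying $\sigma$ to the already-established nonnegative Laurent expansion of $X_{3-n}$ (valid since $3-n\ge 3$) and then re-standardizing via $X_2^aX_1^b=q^{-ab}X_1^bX_2^a$ produces a Laurent expansion of $X_n$ in which each coefficient, as a polynomial in $q^{\pm 1/2}$, still lies in $\mathbb{Z}_{\ge 0}[q^{\pm 1/2}]$.

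The argument is essentially mechanical. The main things to verify are (i) that every summand in (\ref{maineq1}) really does enter with coefficient $+1$, which is transparent from the definition of $\beta_{[i]}$, and (ii) that neither the normal-ordering relation $X_1X_2=qX_2X_1$ nor the involution $\sigma$ ever introduces a minus sign, which is immediate from the form of the relations. Accordingly, I do not anticipate any genuine obstacle; the only point requiring care is the bookkeeping of the power of $q^{1/2}$ acquired when rewriting each $\varphi$-image in the basis $\{X_1^aX_2^b\}$, but since these acquired factors are themselves monomials in $q^{\pm 1/2}$, positivity is never disturbed.
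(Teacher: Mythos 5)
Your argument is essentially the paper's proof, just spelled out: the paper derives the corollary ``immediately'' from Theorem~\ref{mainthm} and Proposition~\ref{noncomm-quantum}, and your observation that each $\beta_{[i]}$ is a single monomial with coefficient $+1$, so that $\varphi$ and the normal-ordering relation $X_1X_2=qX_2X_1$ never introduce signs, is exactly what makes that deduction immediate. Your additional treatment of $n\le 0$ via the bar-type involution $\sigma$ is a sensible (and standard) way to fill a case the paper leaves implicit, but it does not change the underlying approach.
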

\begin{proof}
It follows immediately from Theorem \ref{mainthm} and Proposition \ref{noncomm-quantum}.
\end{proof}

On the other hand, the quantum cluster variable $X_n$ of an acyclic quantum cluster algebra can be expressed using the quantum cluster character \cite{Q, r-quantum}. In particular, the following identity holds for the rank 2 case, 
$$
\aligned
X_n
&=\sum_{\mathbf{e}=(e_1,e_2)}q^{\frac{r(e_1^2+(d_2-e_2)^2)-d_1d_2}{2}}E(\Gr_\mathbf{e}M(n))X_1^{-d_1+r(d_2-e_2)}X_2^{re_1-d_2},
\endaligned
$$ where
$E(\Gr_\mathbf{e}M(n))= \sum_i (-1)^i\dim H^i(\Gr_\mathbf{e}M(n) )q^{ri/2}$ is the Serre polynomial.
Then Corollary~\ref{quantum-pos} implies the vanishing of the odd homologies of $\Gr_\mathbf{e}M(n)$, a fact that is also proved in \cite{Ef, KQ, N, Q}. As a consequence, $E(\Gr_\mathbf{e}M(n))=P_{\Gr_\mathbf{e}M(n)}(q^r)$, therefore the above identity can be rewritten as:
\begin{prop}\label{X_n-virtual}
$$
X_n
=\sum_{\mathbf{e}=(e_1,e_2)}q^{\frac{r(e_1^2+(d_2-e_2)^2)-d_1d_2}{2}}P_{\Gr_\mathbf{e}M(n)}(q^r)X_1^{-d_1+r(d_2-e_2)}X_2^{re_1-d_2}.
$$
\end{prop}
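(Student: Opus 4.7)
The plan is to derive this identity by bootstrapping from the quantum cluster character formula for $X_n$ that is already displayed in the text just above the proposition, together with Corollary~\ref{quantum-pos}. That formula expresses $X_n$ as a sum over dimension vectors $\mathbf{e}=(e_1,e_2)$ with coefficients involving the Serre polynomial $E(\Gr_\mathbf{e}M(n))=\sum_i(-1)^i\dim H^i(\Gr_\mathbf{e}M(n))q^{ri/2}$. The only task remaining is to replace each Serre polynomial by $P_{\Gr_\mathbf{e}M(n)}(q^r)$, which amounts to showing that the odd cohomology of every $\Gr_\mathbf{e}M(n)$ vanishes.

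The engine driving this replacement is positivity. By Corollary~\ref{quantum-pos}, every coefficient in the expansion of $X_n$ as a Laurent polynomial in $X_1,X_2$ lies in $\mathbb{Z}_{\geq 0}[q^{\pm 1/2}]$. Since distinct $\mathbf{e}$ contribute to distinct Laurent monomials $X_1^{-d_1+r(d_2-e_2)}X_2^{re_1-d_2}$ (the exponent pair determines $\mathbf{e}$ because $r\neq 0$), each individual Serre polynomial must itself be a polynomial in $q^{1/2}$ with nonnegative coefficients, even though it is \emph{a priori} an alternating sum in the cohomological degree. This forces $\dim H^i(\Gr_\mathbf{e}M(n))=0$ for every odd $i$, and then
\begin{equation*}
E(\Gr_\mathbf{e}M(n))=\sum_{j\geq 0}\dim H^{2j}(\Gr_\mathbf{e}M(n))\,q^{rj}=P_{\Gr_\mathbf{e}M(n)}(q^r),
\end{equation*}
where the last equality uses property~(i) of the virtual Poincar\'e polynomial applied to the smooth projective variety $\Gr_\mathbf{e}M(n)$. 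Substituting into the quantum cluster character formula yields exactly the expression claimed.

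I anticipate no substantive difficulty, since the argument is already essentially present in the discussion preceding the proposition. What remains is mostly bookkeeping: confirming that the monomial map $\mathbf{e}\mapsto(-d_1+r(d_2-e_2),\,re_1-d_2)$ is injective so that the positivity argument may be applied termwise, and invoking the standard fact that $\Gr_\mathbf{e}M(n)$ is smooth and projective because $M(n)$ is a rigid representation of the acyclic quiver $Q$. The closest thing to a potential obstacle is verifying that the exponent conventions in the statement of the acyclic quantum cluster character formula of \cite{Q, r-quantum} match those displayed in the preceding paragraph, but this is a routine check rather than a conceptual hurdle.
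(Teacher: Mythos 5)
Your proposal is correct and takes essentially the same route as the paper: the paper likewise starts from the quantum cluster character formula with Serre polynomials, invokes Corollary~\ref{quantum-pos} to deduce vanishing of odd cohomology, and concludes $E(\Gr_{\mathbf{e}}M(n))=P_{\Gr_{\mathbf{e}}M(n)}(q^r)$. Your write-up merely makes explicit the two small points the paper leaves implicit, namely that $\mathbf{e}\mapsto(-d_1+r(d_2-e_2),\,re_1-d_2)$ is injective (so positivity can be read off termwise) and that the monomials $q^{ri/2}$ for distinct $i$ do not collide (so the odd-degree Betti numbers must individually vanish).
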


\section{Proof of Theorem~\ref{mainthm2}(2)}\label{proof_mainthm(2)}
Similarly to \cite[Proposition 3.8]{CZ}, we stratify the Grassmannian $\Gr_{e}(M)$ into the disjoint union of locally closed subvarieties 
$$\Gr_{p,e}(M)=\{(N_1,N_2)\in\Gr_e(M): N_1\in Z_{p,e_1}(M)\},\quad 0\le p\le d_2.$$  
It follows from the three properties of virtual Poincare polynomials that
$$P_{\Gr_e(M)}=\sum_p P_{\Gr_{p,e}(M)}=\sum_p\binom{p}{e_2-d_2+p}_qP_{Z_{p,e_1}(M)}=\sum_p\binom{p}{d_2-e_2}_qP_{Z_{p,e_1}(M)}.$$
We can also stratify $\Gr_e(M)$ into subvarieties
$$\Gr'_{p,e}(M)=\{(N_1,N_2)\in\Gr_e(M): N_2\in Z'_{p,d_2-e_2}(M)\},\quad 0\le p\le d_1.$$ 
A similar computation gives
$$P_{\Gr_{e}(M)}=\sum_p P_{\Gr'_{p,e}(M)}=\sum_p\binom{p}{e_1}_qP_{Z'_{p,d_2-e_2}(M)}.$$
Let $B$ be the $(d_1+1)\times (d_1+1)$ upper triangular matrix whose $(i,j)$-entry is $(-1)^{j-i}q^{\binom{j-i}{2}}\binom{j-1}{i-1}_q$ for $1\le i\le j\le d_1+1$. It is easy to verify that
$$B=
\begin{bmatrix} 
\binom{0}{0}_q&\binom{1}{0}_q&\cdots&\binom{d_1}{0}_q\\
0&\binom{1}{1}_q&\cdots&\binom{d_1}{1}_q\\
\vdots&\vdots&\ddots&\vdots\\
0&0&\cdots&\binom{d_1}{d_1}_q
\end{bmatrix}^{-1},
$$
and that 
$$\begin{bmatrix}
P_{Z'_{0,d_2-e_2}(M)}\\
\vdots\\
P_{Z'_{d_1,d_2-e_2}(M)}
\end{bmatrix}
=B
\begin{bmatrix}
P_{G_{0,e_2}(M)}\\
\vdots\\
P_{G_{d_1,e_2}(M)}
\end{bmatrix}.
$$
In particular,
$$P_{Z'_{p,d_2-e_2}(M)}=\sum_{e_1=p}^{d_1}(-1)^{e_1-p}q^{\binom{e_1-p}{2}}\binom{e_1}{p}_qP_{G_{e_1,e_2}(M)}.$$
Using the additive property of virtual Poincare polynomials, we have
$$\aligned
P_{\overline{Z}'_{p,d_2-e_2}(M)}
&=\sum_{p'=p}^{d_1}\sum_{e_1=p'}^{d_1}(-1)^{e_1-p'}q^{\binom{e_1-p'}{2}}\binom{e_1}{p'}_qP_{G_{e_1,e_2}(M)}\\
&=\sum_{e_1=p}^{d_1}P_{G_{e_1,e_2}(M)}\sum_{p'=p}^{e_1}(-1)^{e_1-p'}q^{\binom{e_1-p'}{2}}\binom{e_1}{p'}_q\\
&=\sum_{e_1=p}^{d_1}P_{G_{e_1,e_2}(M)}(-1)^{e_1-p}q^{\binom{e_1-p+1}{2}}\binom{e_1-1}{e_1-p}_q,\\
\endaligned
$$
where the last equality follows from an elementary computation:
$$\sum_{p'=p}^{e_1}(-1)^{e_1-p'}q^{\binom{e_1-p'}{2}}\binom{e_1}{p'}_q
=\sum_{i=0}^{e_1-p}(-1)^{i}q^{\binom{i}{2}}\binom{e_1}{i}_q=(-1)^{e_1-p}q^{\binom{e_1-p+1}{2}}\binom{e_1-1}{e_1-p}_q.$$
This concludes the proof of Theorem~\ref{mainthm2}(2).

\section{Proof of Theorem~\ref{mainthm2}(1)}\label{proof_mainthm(1)}



In order to prove Theorem~\ref{mainthm2}(1), we first compute the coefficient of $\bar{x}^{-d_1+r(d_2-e_2)}\bar{y}^{re_1-d_2}$ in $\bar{x}_5$. Multiplying the coefficient by $q^{\frac{1}{2}[-d_1+r(d_2-e_2)-(re_1-d_2)-1]}$, we get the coefficient of $X_1^{-d_1+r(d_2-e_2)}X_2^{re_1-d_2}$ in $X_6$, thanks to Proposition~\ref{noncomm-quantum}. Then we can compute $P_{\Gr_\mathbf{e}(M)}$ using Proposition~\ref{X_n-virtual}. 

In this section, all the computations take place in the quantum situation $\bar{R}$, hence we shall abuse notations by using $z\in R$ to mean $\varphi(z)\in \bar{R}$ and no confusion should arise. For example, $x$, $y$, $\beta_{[i]}$ actually mean $\bar{x}$, $\bar{y}$, $\varphi(\beta_{[i]})$ respectively. It is easy to verify the following identities:

(1) $x^ay^b=q^{ab}y^bx^a$, $y^bx^a=q^{-ba}x^ay^b$.

(2) $(x^ay^b)^i=q^{-ab\binom{i}{2}}x^{ai}y^{bi}$.

Theorem~\ref{mainthm} implies (remember that the computation takes place in $\bar{R}$)
$$x_{5}=qx\Big{(}\sum_{\beta\in F(\mathcal{D}_6)}\prod_{i=1}^{c_5}\beta_{[i]}\Big{)}x^{-1}.$$  Since $d_1=r^3-2r$, $d_2=r^2-1$, $e_2=1$, the monomial $x^{-d_1+r(d_2-e_2)}y^{re_1-d_2}$ is equal to $y^{re_1-r^2+1}$.  
If we plug in $q=1$, then we should get the commutative formula proved in \cite[Theorem 9]{LS}, which we shall briefly recall. For any $\mathbf{\beta}=\{\beta_1,\cdots,\beta_t\}$, let $|\mathbf{\beta}|_2$ be the total number of edges in $\beta_1,\cdots,\beta_t$, and $|\mathbf{\beta}|_1=\sum_{j=1}^t |\beta_j|_1$, where
$$
|\beta_j|_1=\left\{\begin{array}{ll} 0, &\text{ if }\beta_j=\alpha_i\text{ for some }1\leq i\leq c_{n-1} \\k-i, &\text{ if }\beta_j=\alpha(i,k)\text{ for some }0\leq i<k\leq c_{n-2}.  \end{array}   \right.
$$
Then after setting $q=1$, we have
$$x_{n-1}\big{|}_{q=1}=\sum_{\mathbf{\beta}\in\mathcal{F}(\mathcal{D}_n)}x^{r|\mathbf{\beta}|_1-c_{n-1}}y^{r(c_{n-1}-|\mathbf{\beta}|_2)-c_{n-2}}\big{|}_{q=1}.$$
In particular,
$$x_5\big{|}_{q=1}=\sum_{\mathbf{\beta}\in\mathcal{F}(\mathcal{D}_6)}x^{r|\mathbf{\beta}|_1-(r^3-2r)}y^{r(r^3-2r-|\mathbf{\beta}|_2)-(r^2-1)}\big{|}_{q=1}.$$
Therefore, those $\beta$ that contribute to the monomial $y^{re_1-r^2+1}$ must satisfy $|\beta|_1=r^2-2$, $|\beta|_2=r^3-2r-e_1$, and the first equality implies $|\beta|_1=c_4-1$, hence all but one of the vertical edges are in some color subpath in $\beta$. Thus we may assume 
\begin{equation}\label{eq:beta}
\beta=\{\alpha(0,\ell-1),\alpha(\ell,r^2-1)\}\cup\{\textrm{single edges}\}
\end{equation} 
for certain $1\leq \ell\leq r^2-1$. 

The following lemma determines the color of $\alpha(\ell,r^2-1)$. Its proof is easy and we omit.
\begin{lem}\label{lem:color}
 If $r>2$ then the subpath $\alpha(\ell,r^2-1)$ is
$$
 \left\{ 
  \begin{array}{l l}
    $(3,k)$\text{-green}, & \quad \text{if $\ell=jr+k$  for  $0\le j\le r-2$ and $1\leq k\leq r-2$};\\
    \\
    \text{red}, &\quad \text{if $\ell=jr+r-1$ for $0\le j\le r-2$, or $i=r^2-2$};\\
    \\
     $(4,j)$\text{-green}, &\quad \text{if $\ell=jr$ for $1\le j\le r-2$};\\
     \\
     $(3,k)$\text{-green}, & \quad \text{if $\ell=(r-1)r+k-1$  for  $1\leq k\leq r-2$};\\
     \\
     \text{empty}, & \quad \text{if $\ell=r^2-1$}.
  \end{array} \right.
$$
\end{lem}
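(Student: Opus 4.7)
The plan is to make explicit the coordinates of the lattice points $v_i$ on $\mathcal{D}_6$ and then, for each value of $\ell$ listed in the lemma, identify the smallest $t$ with $\ell < t \le r^2-1$ for which $s_{\ell,t}>s$. Once that minimal $t$ is found, the color of $\alpha(\ell, r^2-1)$ is read off from Definition~\ref{alpha(i,k)} by testing whether $t-\ell$ has the form $c_m - wc_{m-1}$ with $3\le m\le 5$ and $1\le w<r-1$.

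I will set $a := c_5-c_4 = r^3-r^2-2r+1$ and $b := c_4 = r^2-1$, so the diagonal of $\mathcal{D}_6$ has slope $s = b/a$ and $v_i = (\lceil ai/b\rceil,\, i)$. Introducing
\[
f(i) := ai - b\lceil ai/b\rceil,
\]
the slope inequality $s_{\ell,t}>s$ becomes $f(t)>f(\ell)$, because $a(t-\ell) - b(x_t - x_\ell) = f(t)-f(\ell)$. The identity $a = (r-1)b - r$ gives $ai\equiv -ri\pmod b$, and writing $\ell = jr+k$ with $0\le j,k\le r-1$ a short computation produces the closed form $f(\ell) = -j-rk$ for $\ell \notin \{0,r^2-1\}$, with $f(0)=f(r^2-1)=0$.

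Armed with this formula, the proof reduces to a direct case analysis matching the five cases of the lemma. For $\ell = jr + k$ with $1\le k\le r-2$, the minimum is attained at $t=(j+1)r$, yielding $t-\ell = r - k = c_3 - kc_2$, hence $(3,k)$-green. For $\ell = jr+r-1$ (or $\ell = r^2-2$) one gets $t = \ell+1$ and $t-\ell = 1$, which matches $c_3-wc_2 = 1$ only at the excluded value $w=r-1$, so the subpath is red. For $\ell = jr$ with $1\le j\le r-2$, and for $\ell = (r-1)r+k-1$ with $1\le k\le r-2$, no intermediate $t$ achieves $f(t)>f(\ell)$; then $t = r^2-1$ gives $t-\ell = r^2-1-jr = c_4 - jc_3$ (so $(4,j)$-green) or $t-\ell = r-k = c_3 - kc_2$ (so $(3,k)$-green) respectively. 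The case $\ell = r^2-1$ corresponds tautologically to the empty subpath.

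The only step requiring some care is verifying, in the two ``jump'' cases, that no intermediate $t$ produces $f(t)>f(\ell)$. This amounts to checking that on the range $\ell<t<r^2-1$ the integer $j_t + rk_t$ (with $t=j_tr+k_t$, $0\le k_t\le r-1$) stays $\ge j+rk$, which follows at once from the closed form of $f$. Everything else is a substitution into the recursion $c_m = rc_{m-1} - c_{m-2}$ together with the range restriction $1\le w < r-1$.
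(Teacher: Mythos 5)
The paper omits the proof of Lemma~\ref{lem:color} entirely (``Its proof is easy and we omit''), so there is no in-text argument to compare against. Your proposal supplies a correct and self-contained proof. The key maneuver — writing $v_i=(\lceil ai/b\rceil,i)$ with $a=c_5-c_4$, $b=c_4$, and converting the slope comparison $s_{\ell,t}>s$ into $f(t)>f(\ell)$ for $f(i)=ai-b\lceil ai/b\rceil$ — is clean, and the reduction $ai\equiv -ri\pmod b$ together with $r^2\equiv 1\pmod{r^2-1}$ gives the closed form $f(jr+k)=-j-rk$ (for $\ell\neq 0,r^2-1$) correctly; from there the case analysis on $\ell$ mod $r$ goes through and the differences $t-\ell$ match $c_3-kc_2$ or $c_4-jc_3$ as claimed. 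One implicit point you rely on is that $x_t>x_\ell$ whenever $t>\ell$, which follows from $a/b>1$ for $r\ge 3$ and should be said. The only genuine gap is small: in the red cases ($t-\ell=1$) you only rule out $c_3-wc_2=1$, but Definition~\ref{alpha(i,k)} allows $3\le m\le n-1=5$, so you also need to observe that $c_4-wc_3=r^2-1-wr=1$ forces $w=r-2/r\notin\mathbb{Z}$ and $c_5-wc_4=r^3-2r-w(r^2-1)=1$ forces $w=r-1/(r-1)\notin\mathbb{Z}$ for $r>2$; with those two lines added, the argument is complete.
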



First of all, the maximal Dyck path $\mathcal{D}_5$ is of size
$(c_4-c_3)\times c_3=(r^2-r-1)\times r$ and is represented by Christoffel word 
$(h^{r-1}v)^{r-1}h^{r-2}v$ (see Remark~\ref{Christoffel}); the maximal Dyck path $\mathcal{D}_6$ is of size
$(c_5-c_4)\times c_4=(r^3-r^2-2r+1)\times(r^2-1)$ and is represented by
$(A^{r-1}B)^{r-1}A^{r-2}B$ where $A=h^{r-1}v$ and $B=h^{r-2}v$. 

The rest of the proof is a case-by-case study depending on the integer $\ell$ in \eqref{eq:beta}. There are five cases to be considered.

\bigskip

\noindent\textbf{Case 1.} $\ell=jr+k$ where $1\le k\le r-1$, $0\le j\le r-2$. 

The subpath $\alpha(0,\ell-1)$ is blue and of the form $(A^{r-1}B)^jA^{k-1}$, with length $(r^2-1)j+r(k-1)$.
The subpath $\alpha(\ell,r^2-1)$ is either $(3,k)$-green (if $k<r-1$) or red (if $k=r-1$) according to Lemma \ref{lem:color}, and has form $A^{r-1-k}B(A^{r-1}B)^{r-j-2}A^{r-2}B$. Its length is
$r^3-r^2j-rk-2r+j$.

Then $\alpha(0,\ell-1)$ contributes 
$$\prod_{i=1}^{(r^2-1)j+r(k-1)}\beta_{[i]}=(y^{-(r-1)}xy^{-1})^j\cdot y^{-(k-1)}=(q^{r-1}xy^{-r})^j\cdot y^{-(k-1)}=q^{(r-1)j+rj(j-1)/2}x^jy^{-rj-k+1},$$
and similarly, $\alpha(jr+k,r^2-1)$ corresponds to
$$\aligned\prod_{i=(r^2-1)j+rk+1}^{r^3-2r}\beta_{[i]}
&=y^{-(r-1-k)}\cdot xy^{-1}\cdot (y^{-(r-1)}xy^{-1})^{r-j-2}\cdot y^{-(r-2)}\cdot xy^{-1}\\
&=q^{r^3/2-r^2j+rj^2/2+r^2/2-rk-rj/2+kj-r+j-1}x^{r-j}y^{-r^2+rj+k+1}.
\endaligned$$

In $\mathcal{D}_6 \setminus($the support of $\alpha(0,\ell-1)$ and $\alpha(\ell,r^2-1))$, there are $r$ edges.
The last vertical edge must be chosen (or is the first vertical edge of a red subpath).  If we go through all possible choices, then
$$\sum_{\beta} \prod_{i=(r^2-1)j+r(k-1)+1}^{(r^2-1)j+rk}\beta_{[i]}=x^{-(r-1)}\prod_{t=0}^{r-2}(q^{tr}y^r+1)\cdot x^{-1}y^{-1}.$$
Therefore
\begin{equation}\label{case1eq}\aligned 
&\sum_{\beta} \prod_{i=1}^{r^3-2r}\beta_{[i]}\\
&=
q^{r^3/2-r^2j+rj^2+r^2/2-rk+kj-r-1}x^jy^{-rj-k+1}x^{-(r-1)}\prod_{t=0}^{r-2}(q^{tr}y^r+1)\cdot x^{-1}y^{-1}x^{r-j}y^{-r^2+rj+k+1}\\
&=q^{r^3/2-r^2j+r^2/2-rk-1}y^{-r^2+1}\prod_{t=0}^{r-2}(q^{tr-r(r-j-1)}y^r+1).
\endaligned
\end{equation}

\noindent\textbf{Case 2.} $\ell=jr$ where $1\le j\le r-2$. 

The subpath $\alpha(0,\ell-1)$ is blue and its form is $(A^{r-1}B)^{j-1}A^{r-1}$.  Its length is $(r^2-1)j-r+1$.
The subpath $\alpha(\ell,r^2-1)$ is $(4,j)$-green according to Lemma  \ref{lem:color}. It is of the form
 $(A^{r-1}B)^{r-1-j}A^{r-2}B$ and of length $r^3-r^2j-2r+j$.
In $\mathcal{D}_6 \setminus($the support of $\alpha(0,\ell-1)$ and $\alpha(\ell,r^2-1))$, there are $r-1$ edges.
 Since $\alpha(\ell,r^2-1)$ is $(4,j)$-green, at least one of its preceding $c_3-jc_2=r-j$ edges is contained in $\beta$. 

Then $\alpha(0,\ell-1)$ contributes
$$
\aligned
\prod_{i=1}^{(r^2-1)j-r+1}\beta_{[i]}&=(y^{-(r-1)}xy^{-1})^{j-1}\cdot y^{-(r-1)}=(q^{r-1}xy^{-r})^{j-1}\cdot y^{-r+1}\\
&=q^{(r-1)(j-1)+r\binom{j-1}{2}}x^{j-1}y^{-rj+1}
=q^{(j-1)(rj/2-1)}x^{j-1}y^{-rj+1}.
\endaligned$$
Similarly, $\alpha(\ell,r^2-1)$ contributes
$$\aligned\prod_{i=(r^2-1)j+1}^{r^3-2r}\beta_{[i]}
&=(y^{-(r-1)}xy^{-1})^{r-1-j}\cdot y^{-(r-2)}xy^{-1}
=(q^{r-1}xy^{-r})^{r-1-j}\cdot q^{r-2}xy^{-r+1}\\
&=q^{(r-1)(r-1-j)+r\binom{r-1-j}{2}+r-2+r(r-1-j)}x^{r-j}y^{-r^2+rj+1}.
\endaligned$$
Considering admissible subsets of $\mathcal{D}_6 \setminus($the support of $\alpha(0,\ell-1)$ and $\alpha(\ell,r^2-1))$, we get
$$
\aligned
&\sum_\beta\prod_{i=(r^2-1)j-r+2}^{(r^2-1)j}\beta_{[i]}\\
&=(x^{-1}y^r+x^{-1})^{r-2}(x^{-1}y^{r-1}+x^{-1}y^{-1})-(x^{-1}y^r+x^{-1})^{j-1}(x^{-1}y^r)^{r-j-1}x^{-1}y^{r-1}\\
&=x^{-r+1}\Big{[}y^{-1}\prod_{t=0}^{r-2}(q^{tr}y^r+1)-q^{r\binom{r-j}{2}}y^{r^2-rj-1}\prod_{t=r-j}^{r-2}(q^{tr}y^r+1)\Big{]}.
\endaligned
$$

Then
$$
\aligned
&\sum_\beta\prod_{i=1}^{r^3-2r}\beta_{[i]}\\
&=q^{r^3/2-r^2j+rj^2+r^2/2-rj-r}x^{j-1}y^{-rj+1}x^{-r+1}
\\
&\cdot\Big{[}y^{-1}\prod_{t=0}^{r-2}(q^{tr}y^r+1)-q^{r\binom{r-j}{2}}y^{r^2-rj-1}\prod_{t=r-j}^{r-2}(q^{tr}y^r+1)\Big{]}x^{r-j}y^{-r^2+rj+1}\\
&=q^{r^3/2-r^2j+rj^2+r^2/2-rj-r}x^{j-1}y^{-rj+1}x^{-r+1}
\\
&\cdot\Big{[}q^{r-j}x^{r-j}y^{-r^2+rj}\prod_{t=0}^{r-2}(q^{r(t-r+j)}y^r+1)-q^{(r-j)(-r^2+rj-r+2)/2}x^{r-j}\prod_{t=r-j}^{r-2}(q^{r(t-r+j)}y^r+1)\Big{]}
\\
\endaligned
$$
\begin{equation}\label{case2eq}
\aligned
&=q^{r^3/2-r^2j+rj^2+r^2/2-rj-r}x^{j-1}y^{-rj+1}x^{-r+1}
\\
&\cdot x^{r-j}\Big{[}q^{r-j}y^{-r^2+rj}\prod_{t=0}^{r-2}(q^{r(t-r+j)}y^r+1)-q^{(r-j)(-r^2+rj-r+2)/2}\prod_{t=r-j}^{r-2}(q^{r(t-r+j)}y^r+1)\Big{]}
\\
&=q^{r^3/2-r^2j+r^2/2-1}y^{-rj+1}\Big{[}y^{-r^2+rj}\prod_{t=0}^{r-2}(q^{r(t-r+j)}y^r+1)-q^{-r\binom{r-j+1}{2}}\prod_{t=r-j}^{r-2}(q^{r(t-r+j)}y^r+1)\Big{]}.
\endaligned
\end{equation}

\bigskip

\noindent\textbf{Case 3.}  $\ell=(r-1)r$.

The subpath $\alpha(0,\ell-1)$ is blue and of the form $(A^{r-1}B)^{r-2}A^{r-1}$.  Its length is $(r^2-1)(r-2)+r(r-1)$.
The subpath $\alpha(\ell,r^2-1)$ is
either $(3,1)$-green if $r\ge 3$ or red if $r=2$. It is of the form $A^{r-2}B$, and of length $(r-2)r+(r-1)$.

Then $\alpha(0,\ell-1)$ contributes
$$
\aligned
\prod_{i=1}^{(r^2-1)(r-2)+r(r-1)}\beta_{[i]}
&=(y^{-(r-1)}xy^{-1})^{r-2}\cdot y^{-(r-1)}
=q^{(r+1)(r-2)^2/2}x^{r-2}y^{-r^2+r+1}.
\endaligned$$
Similarly $\alpha(\ell,r^2-1)$ contributes
$$\prod_{i=r^3-r^2-r+2}^{r^3-2r}\beta_{[i]}
=y^{-(r-2)}xy^{-1}=q^{r-2}xy^{-(r-1)}.
$$
In $\mathcal{D}_6 \setminus($the support of $\alpha(0,(r-1)r-1)$ and $\alpha((r-1)r,r^2-1))$, there are $r-1$ edges, whose corresponding word is $h^{r-2}v$. The last edge must be either chosen or the first edge of a red path.
$$
\aligned
\sum_\beta\prod_{i=(r^2-1)(r-2)+r(r-1)+1}^{r^3-r^2-r+1}\beta_{[i]}&=(x^{-1}y^r+x^{-1})^{r-2}x^{-1}y^{-1}=x^{-r+1}y^{-1}\prod_{t=1}^{r-2}(q^{tr}y^r+1)
\endaligned
$$
Then
\begin{equation}\label{case3eq}
\aligned
\sum_\beta\prod_{i=1}^{r^3-2r}\beta_{[i]}
&=q^{(r+1)(r-2)^2/2+r-2}x^{r-2}y^{-r^2+r+1}x^{-r+1}y^{-1}\prod_{t=1}^{r-2}(q^{tr}y^r+1)\cdot xy^{-(r-1)}\\
&=q^{-r^3/2+3r^2/2-1}y^{-r^2+1}\prod_{t=1}^{r-2}(q^{(t-1)r}y^r+1).
\endaligned
\end{equation}

\bigskip

\noindent\textbf{Case 4.}  $\ell=(r-1)r+k-1$ where $2\le k\le r-1$. 

The subpath $\alpha(0,\ell-1)$ is blue and of the form $(A^{r-1}B)^{r-1}A^{k-2}$.  Its length is $(r^2-1)(r-1)+r(k-2)$.
The subpath $\alpha(\ell,r^2-1)$ is
either $(3,k)$-green if $k\le r-2$ or red if $k=r-1$. It is of the form $A^{r-1-k}B$ and of length
 $(r-1-k)r+(r-1)$.
Then $\alpha(0,\ell-1)$ contributes
$$
\aligned
\prod_{i=1}^{(r^2-1)(r-1)+r(k-2)}\beta_{[i]}
&=(y^{-(r-1)}xy^{-1})^{r-1}\cdot y^{-(k-2)}
=q^{(r-1)(r^2/2-1)}x^{r-1}y^{-r^2+r-k+2}.
\endaligned$$
Similarly $\alpha(\ell,r^2-1)$ contributes
$$\prod_{i=(r^2-1)(r-1)+r(k-1)+1}^{r^3-2r}\beta_{[i]}
=y^{-(r-1-k)}xy^{-1}=q^{r-1-k}xy^{-(r-k)}.
$$
In $\mathcal{D}_6 \setminus($the support of $\alpha(0,(r-1)r+k-2)$ and $\alpha((r-1)r+k-1,r^2-1))$, there are $r$ edges, whose corresponding word is $h^{r-1}v$. The last edge must be either chosen or the first edge of a red path. 
$$
\aligned
\sum_\beta\prod_{i=(r^2-1)(r-1)+r(k-2)+1}^{(r^2-1)(r-1)+r(k-1)}\beta_{[i]}&=(x^{-1}y^r+x^{-1})^{r-1}x^{-1}y^{-1}=x^{-r}y^{-1}\prod_{t=1}^{r-1}(q^{tr}y^r+1)
\endaligned
$$
Then
\begin{equation}\label{case4eq}
\aligned
\sum_\beta\prod_{i=1}^{r^3-2r}\beta_{[i]}
&=q^{(r-1)(r^2/2-1)+r-1-k}x^{r-1}y^{-r^2+r-k+2}x^{-r}y^{-1}\prod_{t=1}^{r-1}(q^{tr}y^r+1)\cdot xy^{-(r-k)}\\
&=q^{-r^3/2+3r^2/2-rk+r-1}y^{-r^2+1}\prod_{t=0}^{r-2}(q^{tr}y^r+1).
\endaligned
\end{equation}

\bigskip

\noindent\textbf{Case 5.} $\ell=r^2-1$. 

In this case, the subpath $\alpha(\ell,r^2-1)$ vanishes. The subpath $\alpha(0,r^2-2)$ is blue and of the form
$(A^{r-1}B)^{r-1}A^{r-2}$. Its length is $(r^2-1)(r-1)+r(r-2)=r^3-3r+1$.
$$
\aligned
\sum_\beta\prod_{i=1}^{r^3-3r+1}\beta_{[i]}
&=(y^{-(r-1)}xy^{-1})^{r-1}y^{-(r-2)}=q^{(r-1)(r^2/2-1)}x^{r-1}y^{-r^2+2}.\\
\endaligned
$$
What is left in $\mathcal{D}_6$ contributes 
$$
\aligned
\sum_\beta\prod_{i=1r^3-3r+2}^{r^3-2r}\beta_{[i]}
&=(x^{-1}y^r+x^{-1})^{r-2}(x^{-1}y^{r-1}+x^{-1}y^{-1})=x^{-(r-1)}y^{-1}\prod_{t=0}^{r-2}(q^{tr}y^r+1).
\endaligned
$$
Then
\begin{equation}\label{case5eq}
\aligned
\sum_\beta\prod_{i=1}^{r^3-2r}\beta_{[i]}
&=q^{(r-1)(r^2/2-1)}x^{r-1}y^{-r^2+2}x^{-(r-1)}y^{-1}\prod_{t=0}^{r-2}(q^{tr}y^r+1)\\
&=q^{(r-1)(-r^2/2+1)}y^{-r^2+1}\prod_{t=0}^{r-2}(q^{tr}y^r+1).
\endaligned
\end{equation}

\noindent\textbf{Add up Cases 1--5.} Add (\ref{case1eq}), (\ref{case2eq}), (\ref{case3eq}), (\ref{case4eq}), (\ref{case5eq}), we get
\begin{eqnarray*}
&\quad&\sum_{k=1}^{r-1}\sum_{j=0}^{r-2}q^{r^3/2-r^2j+r^2/2-rk-1}y^{-r^2+1}\prod_{t=0}^{r-2}(q^{tr-r(r-j-1)}y^r+1)\\
&+&
\sum_{j=1}^{r-2}
q^{r^3/2-r^2j+r^2/2-1}y^{-rj+1}\Big{[}y^{-r^2+rj}\prod_{t=0}^{r-2}(q^{r(t-r+j)}y^r+1)-q^{-r\binom{r-j+1}{2}}\prod_{t=r-j}^{r-2}(q^{r(t-r+j)}y^r+1)\Big{]}\\
&+&
q^{-r^3/2+3r^2/2-1}y^{-r^2+1}\prod_{t=1}^{r-2}(q^{(t-1)r}y^r+1)+
\sum_{k=2}^{r-1}q^{-r^3/2+3r^2/2-rk+r-1}y^{-r^2+1}\prod_{t=0}^{r-2}(q^{tr}y^r+1)\\
&+&
q^{(r-1)(-r^2/2+1)}y^{-r^2+1}\prod_{t=0}^{r-2}(q^{tr}y^r+1),\\
&=&q^{\frac{(r-1)(2-r^2)}{2}}y^{-r^2+1}\bigg{(}
\sum_{k=1}^{r-1}\sum_{j=0}^{r-2}q^{r^3-r-r^2j-rk}\prod_{t=0}^{r-2}(q^{tr-r(r-j-1)}y^r+1)\\
&+&
\sum_{j=1}^{r-2}
q^{r^3-r-r^2j}y^{r^2-rj}\Big{[}y^{-r^2+rj}\prod_{t=0}^{r-2}(q^{r(t-r+j)}y^r+1)-q^{-r\binom{r-j+1}{2}}\prod_{t=r-j}^{r-2}(q^{r(t-r+j)}y^r+1)\Big{]} \quad  \quad \quad\\
&+&
q^{r^2-r}\prod_{t=1}^{r-2}(q^{(t-1)r}y^r+1)+
\sum_{k=2}^{r-1}q^{r^2-rk}\prod_{t=0}^{r-2}(q^{tr}y^r+1)+ \prod_{t=0}^{r-2}(q^{tr}y^r+1)\bigg{)},
\end{eqnarray*}
which equals to
\begin{equation}\label{prodbeta}
\aligned
&
q^{\frac{(r-1)(2-r^2)}{2}}y^{-r^2+1}\bigg{(}
(\sum_{k=0}^{r-2}q^{rk})\sum_{j=1}^{r-1}q^{r^2j}\prod_{t=0}^{r-2}(q^{tr-rj}y^r+1)\\
+&
\sum_{j=1}^{r-2}
q^{r^3-r-r^2j}\Big{[}\prod_{t=0}^{r-2}(q^{r(t-r+j)}y^r+1)-q^{-r\binom{r-j+1}{2}}y^{r^2-rj}\prod_{t=0}^{j-2}(q^{tr}y^r+1)\Big{]}\\
+&
q^{r^2-r}\prod_{t=0}^{r-3}(q^{tr}y^r+1)+
(\sum_{k=0}^{r-2}q^{rk})\prod_{t=0}^{r-2}(q^{tr}y^r+1)\bigg{)}.
\endaligned
\end{equation}

On the other hand, if we replace $X_1$ with $q^{-1/2}x$ and $X_2$ with $q^{1/2}y$ in Proposition~\ref{X_n-virtual} for $n=6$ and multiply by $q^{1/2}$, then  we obtain
\begin{equation}\label{eq:n=6}
\aligned
&q^{1/2}\sum_{\mathbf{e}=(e_1,e_2)}q^{\frac{r(e_1^2+(d_2-e_2)^2)-d_1d_2}{2}}P_{\Gr_\mathbf{e} M(6)}(q^r)(q^{-1/2}x)^{-d_1+r(d_2-e_2)}(q^{1/2}y)^{re_1-d_2},\\
\endaligned
\end{equation}
where $d_1=c_5=r^3-2r$, $d_2=c_4=r^2-1$. The sum of terms with  $e_2=1$ in \eqref{eq:n=6} is
$$
\aligned
&q^{1/2}\sum_{e_1}q^{\frac{r(e_1^2+(d_2-1)^2)-d_1d_2}{2}}P_{\Gr_\mathbf{e}M}(q^r)(q^{1/2}y)^{re_1-d_2}\\
&=\sum_{e_1}q^{\frac{(-r^2+2)(r+1)+re_1^2+re_1}{2}}P_{\Gr_\mathbf{e}M}(q^r)y^{re_1-d_2}
\endaligned
$$
Let $L$ be \eqref{prodbeta}. Proposition~\ref{noncomm-quantum} implies that
$$qxLx^{-1}=\sum_{e_1}q^{\frac{(-r^2+2)(r+1)+re_1^2+re_1}{2}}P_{\Gr_\mathbf{e}M}(q^r)y^{re_1-d_2},$$
which yields
$$\aligned L&=\sum_{e_1}q^{-1}x^{-1}q^{\frac{(-r^2+2)(r+1)+re_1^2+re_1}{2}}P_{\Gr_\mathbf{e}M}(q^r)y^{re_1-d_2}x\\
&=q^{\frac{(r-1)(2-r^2)}{2}}y^{-r^2+1}\sum_{e_1}q^{\frac{r(e_1^2-e_1)}{2}}P_{\Gr_\mathbf{e}M}(q^r)y^{re_1}.\endaligned$$

After replacing $q^r$ by $q$ and $y^r$ by $y$, we get
$$
\aligned
&\sum_{e_1}q^{e_1(e_1-1)/2}P_{Gr_\mathbf{e}M}(q)y^{e_1}
=\Big{(}\sum_{k=0}^{r-2}q^{k}\Big{)}\sum_{j=1}^{r-1}q^{rj}\prod_{t=0}^{r-2}(q^{t-j}y+1)\\
&\quad\quad\quad\quad+
\sum_{j=1}^{r-2}
q^{r^2-1-rj}\Big{[}\prod_{t=0}^{r-2}(q^{t-r+j}y+1)-q^{-\binom{r-j+1}{2}}y^{r-j}\prod_{t=0}^{j-2}(q^ty+1)\Big{]}\\
&\quad\quad\quad\quad+
q^{r-1}\prod_{t=0}^{r-3}(q^ty+1)+
(\sum_{k=0}^{r-2}q^k)\prod_{t=0}^{r-2}(q^ty+1).
\endaligned
$$

It is straightforward to check the following identity: for any integers $a\leq b$, we have
$$\prod_{i=a}^b(q^iy+1)=\sum_{i} q^{i(i-1)/2+ai}\binom{b-a+1}{i}_q\; y^i.$$
The identity implies that
$q^{e_1(e_1-1)/2}P_{Gr_eM}(q)$ is equal to
$$\aligned &[r-1]_q\sum_{j=1}^{r-1}q^{rj}q^{e_1(e_1-1)/2-je_1}\binom{r-1}{e_1}_q\\
&+\sum_{j=1}^{r-2}q^{r^2-1-rj}\bigg{[} q^{e_1(e_1-1)/2+(-r+j)e_1}\binom{r-1}{e_1}_q-q^{-\binom{r-j+1}{2}}q^{(e_1-r+j)(e_1-r+j-1)/2}\binom{j-1}{e_1-r+j}_q\bigg{]}\\
&+q^{r-1}q^{e_1(e_1-1)/2}\binom{r-2}{e_1}_q+[r-1]_qq^{e_1(e_1-1)/2}\binom{r-1}{e_1}_q.\endaligned$$

Canceling $q^{e_1(e_1-1)/2}$ out, we get
$$
\aligned P_{Gr_eM}(q)=&[r-1]_q\sum_{j=1}^{r-1}q^{(r-e_1)j}\binom{r-1}{e_1}_q+\sum_{j=1}^{r-2}q^{r^2-1-rj+(-r+j)e_1}\bigg{[} \binom{r-1}{e_1}_q-\binom{j-1}{e_1-r+j}_q\bigg{]}\\
& +q^{r-1}\binom{r-2}{e_1}_q+[r-1]_q\binom{r-1}{e_1}_q\\
\endaligned,$$
Hence
\begin{eqnarray*}
P_{Gr_eM}(q)&\overset{(*)}=&[r-1]_q\sum_{j=0}^{r-1}q^{(r-e_1)j}\binom{r-1}{e_1}_q+\sum_{j=1}^{r-2}q^{r^2-1-rj+(-r+j)e_1}\bigg{[} \binom{r-1}{e_1}_q-\binom{j-1}{e_1-r+j}_q\bigg{]}\\
&&\quad + q^{r-1-e_1}(\binom{r-1}{e_1}_q-\binom{r-2}{e_1-1}_q)\\
&=&[r-1]_q\sum_{j=0}^{r-1}q^{(r-e_1)j}\binom{r-1}{e_1}_q+\sum_{j=1}^{r-1}q^{r^2-1-rj+(-r+j)e_1}\bigg{[} \binom{r-1}{e_1}_q-\binom{j-1}{e_1-r+j}_q\bigg{]}\\
&=&[r-1]_q\sum_{j=0}^{r-1}q^{(r-e_1)j}\binom{r-1}{e_1}_q+\sum_{j=1}^{r-1}q^{(r-e_1)(r-j)-1}\bigg{[} \binom{r-1}{e_1}_q-\binom{j-1}{e_1-r+j}_q\bigg{]}\\
&=&[r-1]_q\sum_{j=0}^{r-1}q^{(r-e_1)j}\binom{r-1}{e_1}_q+\sum_{j=1}^{r-1}q^{(r-e_1)j-1}\bigg{[} \binom{r-1}{e_1}_q-\binom{r-j-1}{e_1-j}_q\bigg{]}\\
&=&[r-1]_q\binom{r-1}{e_1}_q + \sum_{j=1}^{r-1}q^{(r-e_1)j-1}\bigg{[} [r]_q\binom{r-1}{e_1}_q-\binom{r-j-1}{e_1-j}_q\bigg{]}, \\
\end{eqnarray*}
where $(*)$ follows from the identity 
$q^{-i}(\binom{r-1}{i}_q-\binom{r-2}{i-1}_q)=\binom{r-2}{i}_q$.
Since $[n]_q=\binom{n}{1}_q$ for every integer $n$, the proof of Theorem~\ref{mainthm2}(1) is thus completed.

\end{document}